\documentclass[11pt]{article}
\usepackage{amssymb}
\usepackage{amsthm}
\usepackage{amsmath,amsfonts,amssymb}
\usepackage{graphicx}

\parindent 0pt
\parskip 8pt

\addtolength{\textwidth}{3cm}
\addtolength{\oddsidemargin}{-1.5cm}
\addtolength{\textheight}{4cm}
\addtolength{\topmargin}{-2cm}

\newtheorem{theorem}{Theorem}[section]

\newtheorem{proposition}[theorem]{Proposition}

\newtheorem{definition}[theorem]{Definition}

\begin{document}

\title{Existence and Properties of the State Operator in \\Dynamic User Equilibrium}
\author{Terry L. Friesz$^{a}\thanks{Corresponding author, e-mail: tfriesz@psu.edu}$
\qquad Ke Han$^{b}\thanks{e-mail: kxh323@psu.edu}$
\qquad Tao Yao$^{a}\thanks{e-mail: tyy1@engr.psu.edu}$ \\\\
$^{a}$\textit{Department of  Industrial and Manufacturing Engineering}\\
\textit{Pennsylvania State University, PA 16802, USA}\\
$^{b}$\textit{Department of Mathematics}\\
\textit{Pennsylvania State University, PA 16802, USA}}
\date{}
\maketitle

\begin{abstract}
In this paper, we establish and prove analytical properties of the state operator embedded in an optimal control problem, in the context of {\it dynamic user equilibrium} (DUE) models (Friesz et al. 1993).
\end{abstract}

\section{Introduction}
This paper considers a very general differential variational inequality formulation of the dynamic traffic assignment problem and characteristics of the travel delay or latency operator that assure a solution exists. Because the delay operator is the outcome of a separate differential algebraic equation (DAE) model that expresses unit travel time for each state (volume) encountered, it is referred to herein as the ``state operator".

\section{Preliminaries}
\begin{theorem}\label{fpthm} {\bf (Contraction mapping theorem)}
Let $X$ be a Banach space, $\Theta$ a metric space, and let $\Phi: \Theta\times X\rightarrow X$ be a continuous mapping such that, for some $\kappa<1$, 
\begin{equation}\label{fpthmeqn1}
\left\|\Phi(\theta,\,x)-\Phi(\theta,\,y)\right\|~\leq~\kappa \left\|x-y\right\|\qquad \forall\, \theta,\,x,\,y
\end{equation}
Then for each $\theta\in\Theta$ there exists a unique fixed point $x(\theta)\in X$ such that 
\begin{equation}\label{fpthmeqn2}
x(\theta)~=~\Phi\big(\theta,\,x(\theta)\big)
\end{equation}
The map $\theta\mapsto x(\theta)$ is continuous. Moreover, for any $\theta\in\Theta,\,y\in X$ one has
\begin{equation}\label{fpthmeqn3}
\left\|y-x(\theta)\right\|~\leq~{1\over 1-\kappa}\left\|y-\Phi(\theta,\,y)\right\|
\end{equation}
\end{theorem}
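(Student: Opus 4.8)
The plan is to proceed in three stages: establish existence and uniqueness of the fixed point for each fixed $\theta$, then derive the a priori estimate \eqref{fpthmeqn3}, and finally deduce continuity of $\theta\mapsto x(\theta)$ as a consequence of that estimate.

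First, fix $\theta\in\Theta$ and consider the map $\Phi_\theta:=\Phi(\theta,\cdot):X\rightarrow X$. By \eqref{fpthmeqn1} it is Lipschitz with constant $\kappa<1$, in particular continuous. Starting from an arbitrary $x_0\in X$, I would form the Picard iterates $x_{n+1}:=\Phi_\theta(x_n)$. A routine induction gives $\|x_{n+1}-x_n\|\leq\kappa^n\|x_1-x_0\|$, and summing the geometric series yields, for $m>n$,
\[
\|x_m-x_n\|~\leq~\frac{\kappa^n}{1-\kappa}\,\|x_1-x_0\|,
\]
so $(x_n)$ is Cauchy. Since $X$ is a Banach space, $x_n$ converges to some $x(\theta)\in X$; passing to the limit in $x_{n+1}=\Phi_\theta(x_n)$ and using continuity of $\Phi_\theta$ gives \eqref{fpthmeqn2}. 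Uniqueness is immediate: if $x,x'$ are both fixed points, then $\|x-x'\|=\|\Phi_\theta(x)-\Phi_\theta(x')\|\leq\kappa\|x-x'\|$, which forces $\|x-x'\|=0$.

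Next, for the bound \eqref{fpthmeqn3}, I would fix $\theta$ and $y\in X$ and insert $\Phi(\theta,y)$ via the triangle inequality:
\[
\|y-x(\theta)\|~\leq~\|y-\Phi(\theta,y)\|+\|\Phi(\theta,y)-\Phi(\theta,x(\theta))\|~\leq~\|y-\Phi(\theta,y)\|+\kappa\,\|y-x(\theta)\|,
\]
where the last inequality uses \eqref{fpthmeqn1} together with $x(\theta)=\Phi(\theta,x(\theta))$. Rearranging and dividing by $1-\kappa>0$ gives \eqref{fpthmeqn3}. Then continuity of $\theta\mapsto x(\theta)$ follows by applying \eqref{fpthmeqn3} with the point $y=x(\theta_0)$ and the parameter value $\theta$:
\[
\|x(\theta_0)-x(\theta)\|~\leq~\frac{1}{1-\kappa}\,\|x(\theta_0)-\Phi(\theta,x(\theta_0))\|.
\]
As $\theta\rightarrow\theta_0$, joint continuity of $\Phi$ gives $\Phi(\theta,x(\theta_0))\rightarrow\Phi(\theta_0,x(\theta_0))=x(\theta_0)$, so the right-hand side tends to $0$ and hence $x(\theta)\rightarrow x(\theta_0)$.

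I do not expect a genuine obstacle here, since this is the classical parametrized Banach fixed point theorem. The one point requiring care — and the reason for the ordering above — is that continuity of the fixed-point map is cleanest when obtained from the quantitative estimate \eqref{fpthmeqn3} rather than argued from scratch; one must also be sure to invoke the \emph{joint} continuity hypothesis on $\Phi$ (not merely continuity in the second argument), and this is used precisely at the final step.
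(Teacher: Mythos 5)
Your proof is correct. Note, however, that the paper itself offers no proof of Theorem \ref{fpthm}: it is stated as a known preliminary (it is essentially the parametrized contraction mapping theorem as presented in Bressan and Piccoli (2005)) and is simply invoked later, so there is no argument in the paper to compare against. Your three-stage argument --- Picard iteration in the Banach space $X$ for existence and uniqueness, the triangle-inequality derivation of the a priori bound (\ref{fpthmeqn3}), and then continuity of $\theta\mapsto x(\theta)$ obtained by applying (\ref{fpthmeqn3}) with $y=x(\theta_0)$ and using the joint continuity of $\Phi$ --- is the standard and complete proof, and your remark that joint continuity (not merely continuity in $x$) is what is genuinely needed at the last step is exactly the right point of care.
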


\vskip 3em

We begin by considering the control vector
$$
u\in\big(\mathcal{L}^2[t_0,\,t_f]\big)^m
$$
and associated operator
\begin{equation}\label{eqn1}
x(u,\,t)~=~\hbox{arg}\left\{{dy\over dt}=f\big(y(t),\,u(t),\,t\big),\,y(t_0)=x_0,\,\Gamma[y(t_f),\,t_f]=0\right\}\in\big(C^0[t_0,\,t_f]\big)^n
\end{equation}
where
\begin{align}
\label{eqn2}
x_0&~\in~\mathbb{R}^n\\
f&~:~\mathbb{R}^n\times \mathbb{R}^m\times \mathbb{R}^1 ~\longrightarrow~\mathbb{R}^n\\
\label{eqn3}
\Gamma&~:~\mathbb{R}^n\times\mathbb{R}^1 ~\longrightarrow~\mathbb{R}^n
\end{align}
$\big(\mathcal{L}^2[t_0,\,t_f]\big)^m$ is the m-fold product of the space of square-integrable functions $\mathcal{L}^2[t_0,\,t_f]$ with inner product defined by
\begin{equation}\label{eqn4}
\big<u,\,v\big>~=~\int_{t_0}^{t_f} [u(t)]^T\,v(t)\,dt
\end{equation}
where superscript $T$ stands for transpose of vectors. The entity $x(u,\,t)$ is to be interpreted as an operator that tells us the state vector $x$ for each control vector $u$ and each time $t\in[t_0,\,t_f]\subset\mathbb{R}$ when there are end point conditions which the state variables must satisfy. Working with this operator is, in effect, a supposition that a two point boundary value problem involving the state variables has a solution for each control vector considered. Note that constraints on u are enforced separately \footnote{This definition of $x(u,\,t)$ is precisely that given by Minoux (1986) when analyzing optimal control problems from the point of view of infinite dimensional mathematical programming. Moreover, unless other conditions are satisfied $x(u,\,t)$ is not a solution of the variational inequality considered in (\ref{vi}); rather it should be thought of as a parametric representation of the state vector in terms of the controls. note also that we do not actually have to explicitly solve for $x(u,\,t)$, as is made clear in our subsequent analysis.}, so in working with $x(u,\,t)$ we are not presuming existence of a solution of the variational inequality to be articulated below. 

Furthermore, we assume that every control vector is constrained to lie in a set
$$
U~\subset~\big(\mathcal{L}^2[t_0,\,t_f]\big)^m
$$
where $U$ is defined so as to ensure the terminal conditions imposed on the state variables may be reached from the initial conditions intrinsic to (\ref{eqn1}). Given the operator (\ref{eqn1}), the variational inequality of interest to us takes the following form:
$$
\hbox{find~} u^*\in U~\hbox{such that}
$$
\begin{equation}\label{vi}
\Big<F\big(x(u^*),\,u^*,\,t\big),\,u-u^*\Big>~\geq~0\qquad\forall~u\in U
\end{equation}
where
$$
F:~\big(C^0[t_0,\,t_f]\big)^n\times \big(\mathcal{L}^2[t_0,\,t_f]\big)^m\times \mathbb{R}^1~\longrightarrow~\big(\mathcal{L}^2[t_0,\,t_f]\big)^m
$$
Note that, by virture of the inner product (\ref{eqn4}), we may state the variational inequality (\ref{vi}) as 
$$
\Big<F\big(x(u^*),\,u^*,\,t\big),\,u-u^*\Big>~\equiv~\int_{t_0}^{t_f}\left[F\big(x(u^*),\,u^*,\,t\big)\right]^T(u-u^*)~\geq~0
$$
We refer to (\ref{vi}) as a differential variational inequality with explicit controls and give it the symbolic name $DVI(F,\,f,\,U)$.

\section{Regularity Conditions for Differential Variational Inequalities with Controls}
To analyze (\ref{vi}) we need the following notion of regularity.
\begin{definition}\label{viregularitydef} {\bf (Regularity of $DVI(F,\,f,\,U)$)} We call $DVI(F,\,f,\,U)$ regular if
\begin{enumerate}
\item The state operator $\big(\mathcal{L}^2[t_0,\,t_f]\big)^m \longrightarrow \big(C^0[t_0,\,t_f]\big)^n~:~ u\mapsto x(u,\,\cdot)$ exists and is continuous and G-differentiable with respect to $u$;
\item $\Gamma(x,\,t)$ is continuously differentiable with respect to $x$;
\item $F(x,\,u,\,t)$ is continuous with respect to $x$ and $u$;
\item $f(x,\,u,\,t)$ is convex and continuously differentiable with respect to $x$ and $u$;
\item $U$ is convex and compact; and
\item $x_0\in\mathbb{R}^n$ is known and fixed.
\end{enumerate}
\end{definition}
The motivation for this definition of regularity is to parallel as closely as possible those assumptions needed to analyze traditional optimal control problems from the point of view of infinite dimensional mathematical programming.

To analyze the existence, continuity and differentiability of the state operator, we rely on the following assumption. 

\noindent {\bf (A)} The set $\mathbb{U}\doteq\left\{u(t): ~t\in[t_0,\,t_f],~u\in U\right\} \subset \mathbb{R}^m$ of control values is compact. The function $f=f(x,\,u,\,t)$ is defined and continuous on the space $\mathbb{R}^n\times \mathbb{U}\times \mathbb{R}$, continuously differentiable with respect to $x$, and satisfies
\begin{equation}\label{assumption}
\big|f(x,\,u,\,t)\big|~\leq~C,\qquad \big\|D_x f(x,\,u,\,t)\big\|~\leq~L
\end{equation}
for some constants $C,\,L$ and all $x,\,u,\,t$.

\section{Properties of the state operator}\label{secprop}

\subsection{Existence of the state operator}
The existence of the state operator is an essential issue we must explore. Let us begin with the following theorem to establish the existence of a solution to an ODE.
\begin{theorem}\label{existencethm} {\bf (existence of the solution to ODEs)} Let us consider an initial value problem
\begin{align}
\label{eqn5}
{dx\over dt}&~=~f(x,\,u,\,t)\\
\label{eqn6}
x(t_0)&~=~x_0
\end{align}
for $t\in[t_0,\,t_f]$. Suppose $f(x,\,u,\,t)$ is Lipschitz continuous in $x$ for all $t\in[t_0,\,t_f]$, i.e., the condition 
\begin{equation}\label{lipschitz}
\big|f(x,\,u,\,t)-f(\hat x,\,u,\,t)\big|~\leq~L\big|x-\hat x\big|
\end{equation}
holds for all $x,\,hat x$ and a constant $L\geq 0$. Then the initial value problem (\ref{eqn5})-(\ref{eqn6}) has a unique solution $x(t)$ for $t\in[t_0,\,t_f]$. 
\end{theorem}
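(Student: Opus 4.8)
The plan is to recast the initial value problem as a fixed point problem and invoke the contraction mapping theorem (Theorem~\ref{fpthm}). First I would observe that a continuous function $x:[t_0,\,t_f]\to\mathbb{R}^n$ solves (\ref{eqn5})--(\ref{eqn6}) if and only if it satisfies the integral equation
\begin{equation*}
x(t)~=~x_0+\int_{t_0}^{t}f\big(x(s),\,u(s),\,s\big)\,ds,
\end{equation*}
which follows from the fundamental theorem of calculus together with continuity of $s\mapsto f(x(s),u(s),s)$. Accordingly I would define the operator $\Phi$ on $\big(C^0[t_0,\,t_f]\big)^n$ (equipped with the sup norm, under which it is a Banach space) by $(\Phi x)(t)=x_0+\int_{t_0}^{t}f(x(s),u(s),s)\,ds$, so that solutions of the IVP are exactly the fixed points of $\Phi$; here there is no parameter $\theta$, so $\Theta$ can be taken to be a single point.

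The key estimate is the contraction property. Using the Lipschitz hypothesis (\ref{lipschitz}),
\begin{equation*}
\big|(\Phi x)(t)-(\Phi \hat x)(t)\big|~\leq~\int_{t_0}^{t}\big|f(x(s),u(s),s)-f(\hat x(s),u(s),s)\big|\,ds~\leq~L\int_{t_0}^{t}\big|x(s)-\hat x(s)\big|\,ds~\leq~L(t_f-t_0)\,\|x-\hat x\|.
\end{equation*}
This makes $\Phi$ a contraction precisely when $L(t_f-t_0)<1$, which need not hold, so the main obstacle is covering an arbitrarily long interval. I would resolve this in one of two standard ways. The cleaner option is to replace the sup norm by the equivalent weighted (Bielecki) norm $\|x\|_\lambda=\sup_{t\in[t_0,t_f]}e^{-\lambda(t-t_0)}|x(t)|$ with $\lambda>L$; repeating the estimate above with the weight gives a contraction constant $\kappa=L/\lambda<1$ uniformly on the whole interval, and Theorem~\ref{fpthm} then yields a unique fixed point. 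The alternative is to first prove existence and uniqueness on a short subinterval $[t_0,\,t_0+\delta]$ with $\delta<1/L$, and then extend step by step across $[t_0,\,t_f]$ in finitely many steps, using the endpoint value from each step as the initial condition for the next and patching the continuous pieces together.

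Finally I would record uniqueness: any two solutions are both fixed points of $\Phi$, and Theorem~\ref{fpthm} asserts the fixed point is unique (equivalently, one may apply Gr\"onwall's inequality to $|x(t)-\hat x(t)|\le L\int_{t_0}^t|x(s)-\hat x(s)|\,ds$ to conclude $x\equiv\hat x$). I expect the weighted-norm device to be the crux of the write-up; everything else is routine.
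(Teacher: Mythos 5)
Your proposal is correct and follows essentially the same route as the paper: recast the IVP as a fixed point equation for $\Phi$ and apply Theorem~\ref{fpthm} after introducing the weighted (Bielecki) norm $\sup_t e^{-\alpha t}|x(t)|$ to obtain a contraction constant $L/\alpha<1$ uniformly over $[t_0,\,t_f]$ (the paper takes $\alpha=2L$, matching your $\lambda>L$). The short-interval-and-patch alternative you mention is not used in the paper, but the weighted-norm argument you identify as the crux is exactly the paper's argument.
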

\begin{proof}
Following Walter (1988), we begin by writing an equivalent fixed point problem. For each fixed control $u$, the trajectory $x(\cdot,\,u)$ is the fixed point of the transformation $w\mapsto \Phi(u,\,w)$ defined by
\begin{equation}\label{eqn7}
\Phi(u,\,x)(t)~=~x_0+\int_{t_0}^{t}f\big(x(s),\,u(s),\,s\big)\,ds
\end{equation}
The map $\Phi$ is well defined by assumption {\bf (A)}. Our strategy is to show the existence of the fixed point via the Contraction Mapping Theorem (Theorem \ref{fpthm}). We define the norm on $C^0[t_0,\,t_f]$
\begin{equation}\label{normdef}
\|w\|_{\alpha}~\doteq~\max\left\{\big|x(t)\big|e^{-\alpha\,t}:~t\in[t_0,\,t_f]\right\}
\end{equation}
for a constant $\alpha>0$ to be determined later. Then observe
\begin{align*}
\big|\Phi(u,\,x)(t)-\Phi(u,\,\hat x)(t)\big|&~=~\left| \int_{t_0}^t f\big(x(s),\,u(s),\,s)-f\big(\hat x(s),\,u(s),\,s\big)\,ds\right|\\
&~\leq~ \int_{t_0}^t L\big|x(s)-\hat x(s)\big|\,ds\\
&~=~\int_{t_0}^tL\big|x(s)-\hat x(s)\big|e^{-\alpha s} e^{\alpha s}\,ds\\
&~=~L\big\|x-\hat x\big\|_{\alpha}\int_{t_0}^te^{\alpha s}\,ds\\
&~\leq~L\big\|x-\hat x\big\|_{\alpha}{e^{\alpha t}\over \alpha}
\end{align*}
which leads to 
$$
\big|\Phi(u,\,x)(t)-\Phi(u,\,\hat x)(t)\big|e^{-\alpha t}~\leq~{L\over \alpha}\big\|x-\hat x\big\|_{\alpha}
$$
and therefore
$$
\big\|\Phi(u,\,x)(t)-\Phi(u,\,\hat x)(t)\big\|_{\alpha}~\leq~{L\over \alpha}\big\|x-\hat x\big\|_{\alpha}
$$
Choose $\alpha=2 L$ for example, the operator $\Phi(u,\,\cdot)$ is a contraction mapping with constant ${1\over 2}$. By Theorem \ref{fpthm}, a unique $x^*(t)$ satisfying the fixed point problem (\ref{eqn7}) exists, this completes the proof.
\end{proof}

Theorem \ref{existencethm} states that given any $u\in U$, a unique trajectory $x(u,\,\cdot)$ exists for all $t\in[t_0,\,t_f]$. Thus the state operator exists and is well-defined. 

\begin{theorem}\label{continuitythm} {\bf (Continuity of state operator)} 
Let the assumption {\bf (A)} hold. Then the map $u\mapsto x(u,\,\cdot)$ is continuous from $\big(\mathcal{L}^2[t_0,\,t_f]\big)^m$ into $\big(C^0[t_0,\,t_f]\big)^n$. 
\end{theorem}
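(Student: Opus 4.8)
The plan is to estimate the difference $x(u,\cdot) - x(v,\cdot)$ between two state trajectories in the sup-norm, using the integral (fixed-point) representation \eqref{eqn7} together with assumption {\bf (A)}, and to show this difference is controlled by $\|u-v\|$ in the $\mathcal{L}^2$ norm. Fix $u, v \in (\mathcal{L}^2[t_0,t_f])^m$ and write $x = x(u,\cdot)$, $\hat x = x(v,\cdot)$. For each $t$ we have
$$
x(t) - \hat x(t) ~=~ \int_{t_0}^t \big[ f(x(s), u(s), s) - f(\hat x(s), v(s), s)\big]\, ds.
$$
Splitting the integrand as $[f(x(s),u(s),s) - f(\hat x(s), u(s), s)] + [f(\hat x(s), u(s), s) - f(\hat x(s), v(s), s)]$, the first bracket is bounded by $L|x(s) - \hat x(s)|$ by the Lipschitz property in $x$ guaranteed by \eqref{assumption}. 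The second bracket is the term that carries the dependence on $u - v$.

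The main obstacle is handling that second term, because assumption {\bf (A)} only asserts continuity of $f$ in $u$, not Lipschitz continuity, so one cannot simply bound $|f(\hat x(s), u(s), s) - f(\hat x(s), v(s), s)|$ by a constant times $|u(s) - v(s)|$. I would resolve this with a uniform-continuity argument: the relevant values $(\hat x(s), \cdot, s)$ range over a compact set — the trajectories $\hat x$ live in a fixed bounded set (by $|f| \le C$, all trajectories stay within $|x_0| + C(t_f - t_0)$), the control values lie in the compact set $\mathbb{U}$, and $s \in [t_0, t_f]$ — so $f$ is uniformly continuous there. Hence for every $\eps > 0$ there is $\delta > 0$ such that $|u - v| < \delta$ in $\mathbb{R}^m$ forces $|f(\hat x(s), u(s), s) - f(\hat x(s), v(s), s)| < \eps$. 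To convert an $\mathcal{L}^2$-small perturbation $\|u - v\|$ into a pointwise statement, I would split $[t_0, t_f]$ into the set where $|u(s) - v(s)| < \delta$ (contributing at most $\eps(t_f - t_0)$ to the integral) and its complement, whose measure is at most $\|u-v\|^2/\delta^2$ by Chebyshev and on which the integrand is bounded by $2C$; choosing $\delta$ first and then $\|u-v\|$ small makes the whole second term arbitrarily small.

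With the second term shown to be $o(1)$ as $\|u - v\| \to 0$, say bounded by a quantity $\omega(\|u-v\|)$ with $\omega(r) \to 0$ as $r \to 0$, we arrive at
$$
|x(t) - \hat x(t)| ~\leq~ \omega(\|u-v\|) + L \int_{t_0}^t |x(s) - \hat x(s)|\, ds,
$$
and Gronwall's inequality gives $|x(t) - \hat x(t)| \leq \omega(\|u-v\|) e^{L(t_f - t_0)}$ for all $t$, hence $\|x(u,\cdot) - x(v,\cdot)\|_{C^0} \leq \omega(\|u-v\|) e^{L(t_f-t_0)}$. Letting $\|u-v\| \to 0$ yields continuity of $u \mapsto x(u,\cdot)$. (I would note that the $\alpha$-weighted norm of \eqref{normdef} could be used in place of Gronwall to reach the same conclusion, but a direct Gronwall estimate is cleanest here since the contraction structure has already served its purpose in Theorem \ref{existencethm}.) The only subtlety to state carefully is the a priori boundedness of all trajectories, which follows immediately from the bound $|f| \le C$ in \eqref{assumption} applied to \eqref{eqn7}, and which is what legitimizes restricting attention to a compact domain on which $f$ is uniformly continuous.
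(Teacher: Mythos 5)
Your proof is correct, but it takes a genuinely different route from the paper's. The paper argues sequentially: if $u_n\to u$ in $\mathcal{L}^2$, then $u_n\to u$ in $\mathcal{L}^1$, every subsequence admits a further subsequence converging almost everywhere, and the Dominated Convergence Theorem (legitimized by the bound $|f|\le C$ in \eqref{assumption}) gives $\int_{t_0}^{t_f}\big|f(x(s),u_n(s),s)-f(x(s),u(s),s)\big|\,ds\to 0$; hence $\Phi(u_n,x)\to\Phi(u,x)$ uniformly for each fixed $x$, and the continuous dependence of the fixed point on the parameter --- the part of Theorem \ref{fpthm} resting on estimate \eqref{fpthmeqn3} --- transfers this to $x(u_n,\cdot)\to x(u,\cdot)$. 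You instead produce a direct quantitative estimate: the splitting of $f(x,u,s)-f(\hat x,v,s)$ into a Lipschitz-in-$x$ bracket and a $u$-dependence bracket, a uniform-continuity-plus-Chebyshev bound on the latter (using the a priori bound $|x(t)|\le|x_0|+C(t_f-t_0)$ and compactness of $\mathbb{U}$ to restrict $f$ to a compact set where it is uniformly continuous), and then Gronwall. Both arguments are valid. Yours buys an explicit modulus of continuity $\omega(\|u-v\|)e^{L(t_f-t_0)}$ and is self-contained --- it does not lean on the parametric fixed-point theorem, whose invocation the paper in fact leaves implicit, since its written proof stops after establishing continuity of $u\mapsto\Phi(u,x)$ for fixed $x$. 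The paper's version avoids the uniform-continuity and a priori boundedness bookkeeping by letting the subsequence principle and the DCT do the work, at the cost of being purely qualitative. One caveat common to both arguments: continuity can only be asserted for controls taking values in the compact set $\mathbb{U}$ of assumption {\bf (A)}, since $f$ is only assumed defined and bounded there; your uniform-continuity step uses this explicitly, whereas the paper uses it implicitly when applying $|f|\le C$ to justify domination.
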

\begin{proof}
We follow the proof by Bressan and Piccoli (2005). Recall that the trajectory $x(u,\,\cdot)$ is the fixed point of the transformation $w\mapsto \Phi(u,\,w)$ defined by 
$$
\Phi(u,\,x)(t)~=~x_0+\int_{t_0}^tf\big(x(s),\,u(s),\,s\big)\,ds
$$
To show the continuity, let $\left\{u_n\right\}_{n\in \mathbb{N}}$ be a sequence of controls converging to $u$ in the $L^2$-norm. Then $u_n$ also converges to $u$ in the $L^1$-norm, thus for any subsequence $u_{n'}$ we can extract a further subsequence $u_{n''}$ such that $u_{n''}$ converges to $u$ for almost every $t\in[t_0,\,t_f]$. According to the first inequality of (\ref{assumption}) and the Dominated Convergence Theorem, 
$$
\lim_{n''\rightarrow \infty}\int_{t_0}^{t_f}\left|f\big(x(s),\,u(s),\,s\big)-f\big(x(s),\,u_{n''}(s),\,s\big)\right|\,ds~=~0
$$
Since the subsequence $n'$ is arbitrary, we conclude that the above limit holds for the entire sequence $u_n$. Thus 
$$
\big|\Phi(u_n,\,x)(t)-\Phi(u,\,x)(t)\big|~\leq~\int_{t_0}^{t_f}\left| f\big(x(s),\,u_n(s),\,s\big)-f\big(x(s),\,u(s),\,s\big)\right|\,ds\longrightarrow 0
$$
We conclude that $\Phi(u_n,\,x)$ converges to $\Phi(u,\,x)$ uniformly on $[t_0,\,t_f]$. This proves the continuity of $\Phi$ with respect to $u$. 
\end{proof}

Next let us give a differentiability property of the state operator with respect to the control in the Gateaux sense. 

\begin{theorem}\label{diffthm}{\bf (Differentiability of the state operator w.r.t. the control)}
In addition to the assumption {\bf (A)}, assume that $f$ is continuously differentiable in an open neighborhood $\mathbb{V}$ of $\mathbb{U}$. Let $u(\cdot)\in U$ whose corresponding trajectory $x(u,\,\cdot)$ is defined on $[t_0,\,t_f]$. Then for every bounded measurable $\Delta u(\cdot)$ and every $t\in[t_0,\,t_f]$, $x(u,\,\cdot)$ is G-differentiable with respect to $u$. That is, the derivative
\begin{equation}\label{thmder}
\delta x(u,\,\Delta u)~\equiv~\lim_{\varepsilon\rightarrow 0}{x(u+\varepsilon \Delta u,\,t)-x(u,\,t)\over \varepsilon}
\end{equation}
exists for every such $\Delta u$. In particular,
\begin{equation}\label{eqn8}
\delta x(u,\,\Delta u)~=~\int M(t,\,x)D_uf\big(x(u,\,x),\,u(s),\,s\big)\cdot \Delta u(s)\,ds
\end{equation}
where $D_u f$ denotes the matrix of partial derivatives ${\partial f_i\over \partial u_j}$, and $M$ is the matrix fundamental solution for the linearized problem
\begin{equation}\label{eqn9}
\dot v(t)~=~D_x f\big(x(u,\,t),\,u(t),\,t\big)\cdot v(t)
\end{equation}
\end{theorem}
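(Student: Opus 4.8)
The plan is to reduce the differentiability statement to an application of the contraction mapping theorem (Theorem~\ref{fpthm}), exploiting its continuity-in-parameter conclusion, together with the integral (fixed-point) representation of the trajectory used in Theorems~\ref{existencethm} and~\ref{continuitythm}. Fix $u(\cdot)\in U$ with trajectory $x(u,\cdot)$, fix a bounded measurable perturbation $\Delta u(\cdot)$, and for small $\varepsilon$ write $x_\varepsilon(t)\doteq x(u+\varepsilon\Delta u,t)$, which is well defined for $|\varepsilon|$ small since $\mathbb{U}$ has an open neighborhood $\mathbb{V}$ on which $f$ is $C^1$ and the a~priori bounds in (A) persist. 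The candidate derivative is the solution $v(\cdot)$ of the linear variational equation
\begin{equation}\label{varieq}
\dot v(t)~=~D_xf\big(x(u,t),u(t),t\big)\cdot v(t)+D_uf\big(x(u,t),u(t),t\big)\cdot\Delta u(t),\qquad v(t_0)=0,
\end{equation}
whose solution is exactly (\ref{eqn8}) by variation of parameters with $M$ the fundamental matrix from (\ref{eqn9}); existence and uniqueness of $v$ follow from Theorem~\ref{existencethm} applied to this linear (hence Lipschitz) system.

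The core estimate is to show $\|x_\varepsilon-x(u,\cdot)-\varepsilon v\|_\alpha = o(\varepsilon)$ in the weighted sup-norm (\ref{normdef}). First I would establish the crude bound $\|x_\varepsilon-x(u,\cdot)\|_\alpha \le K|\varepsilon|$: subtracting the two integral equations, using the Lipschitz bound $L$ on $D_xf$ (so $f(\cdot,u+\varepsilon\Delta u,\cdot)-f(\cdot,u,\cdot)$ is $L$-Lipschitz in its first argument) and the uniform bound $C$ on the $u$-variation of $f$ over the compact neighborhood, the weighted-norm Gr\"onwall-type argument already used in the proof of Theorem~\ref{existencethm} yields the claim. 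Next, set $r_\varepsilon \doteq x_\varepsilon - x(u,\cdot) - \varepsilon v$. Writing out the integral equation for $r_\varepsilon$ and Taylor-expanding $f\big(x_\varepsilon(s),u(s)+\varepsilon\Delta u(s),s\big)$ about $\big(x(u,s),u(s),s\big)$, I would split the integrand into (i) a term $D_xf(x(u,s),u(s),s)\cdot r_\varepsilon(s)$, which is absorbed by the contraction/Gr\"onwall mechanism, and (ii) a remainder consisting of the first-order Taylor error. Controlling (ii) is where the work lies: the error is bounded by $\sup |D_xf(\xi)-D_xf(x(u,s))|\cdot|x_\varepsilon-x(u,\cdot)| + \sup|D_uf(\eta)-D_uf(x(u,s),u(s))|\cdot\varepsilon|\Delta u|$ where $\xi,\eta$ lie on the segments between the base point and the perturbed point; using the crude bound $|x_\varepsilon-x(u,s)| \le K|\varepsilon|$ together with uniform continuity of $D_xf,D_uf$ on the relevant compact set, each factor is $o(1)$ as $\varepsilon\to0$, so the integrated remainder is $o(\varepsilon)$. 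Feeding this into the weighted-norm inequality gives $\|r_\varepsilon\|_\alpha \le \tfrac12\|r_\varepsilon\|_\alpha + o(\varepsilon)$, hence $\|r_\varepsilon\|_\alpha = o(\varepsilon)$, which is exactly (\ref{thmder}) with the limit equal to $v$.

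The main obstacle I anticipate is making the Taylor-remainder estimate rigorous under the stated hypotheses, because $\Delta u$ is only bounded and measurable (not continuous), so pointwise-in-$t$ uniform continuity of $D_uf$ must be combined with the measurability of the integrand and the Dominated Convergence Theorem---much as in the proof of Theorem~\ref{continuitythm}---rather than with a naive uniform bound. A related subtlety is ensuring that $x_\varepsilon(s)$ and the Taylor intermediate points $\xi(s),\eta(s)$ remain in the compact neighborhood $\mathbb{V}$ uniformly in $s$ and in $\varepsilon$ small, which follows from the crude Lipschitz bound on $x_\varepsilon-x(u,\cdot)$ but should be stated explicitly. Once these measure-theoretic points are handled, the passage from the fixed-point estimate to the stated formula (\ref{eqn8}) is routine variation of parameters, and linearity of $\Delta u\mapsto v$ (hence of $\Delta u \mapsto \delta x(u,\Delta u)$) is immediate from uniqueness of solutions to (\ref{varieq}).
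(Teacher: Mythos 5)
Your proposal is correct and follows essentially the same route as the paper: both take the candidate derivative to be the solution of the linearized equation $\dot v = D_xf\cdot v + D_uf\cdot\Delta u$, $v(t_0)=0$ (equivalently the variation-of-parameters formula (\ref{eqn8})), and reduce the claim to showing that the first-order Taylor remainder of $f$ is $o(\varepsilon)$ in the weighted contraction norm (\ref{normdef}), controlled via uniform continuity of $D_xf,\,D_uf$ on a compact set and the Dominated Convergence Theorem. The only difference is bookkeeping: the paper applies the a priori fixed-point estimate (\ref{fpthmeqn3}) to the test function $y_\varepsilon = x(u,\cdot)+\varepsilon z$, so the Taylor expansion is taken at points within $O(\varepsilon)$ of the base trajectory by construction and no preliminary bound $\|x_\varepsilon - x(u,\cdot)\| = O(\varepsilon)$ is needed, whereas you estimate $r_\varepsilon = x_\varepsilon - y_\varepsilon$ directly and therefore must first establish that crude Lipschitz bound before absorbing the linear term by the same contraction mechanism.
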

\begin{proof}
We follow the proof by Bressan and Piccoli (2005). Let $z(t)$ be the RHS of (\ref{eqn8}). By Theorem 3.2.6 in Bressan and Piccoli (2005), $z$ is a solution to 
\begin{equation}\label{eqn10}
\dot z(t)~=~A(t)\,z(t)+D_u f\big(x(u,\,t),\,u(t),\,t\big)\cdot\Delta u(t),\qquad z(t_0)~=~0
\end{equation}
where $A(t)=D_xf\big(x(u,\,t),\,u(t),\,t\big)$. If we define 
\begin{align*}
x_{\varepsilon}(t)&~=~x(u+\varepsilon\Delta u,\,t)\\
y_{\varepsilon}(t)&~=~x(u,\,t)+\varepsilon z(t)
\end{align*}
To prove (\ref{eqn8}), it suffices to show that 
\begin{equation}\label{eqn11}
\lim_{\varepsilon\rightarrow 0}\left|{x_{\varepsilon}(t)-y_{\varepsilon}(t)\over \varepsilon}\right|~=~0
\end{equation}
Notice that $x_{\varepsilon}$ is the fixed point of the map $w\mapsto \Phi(u+\varepsilon\Delta u,\,w)$ defined in (\ref{eqn7}), which is contractive with respect to the norm $\|\cdot\|_{\alpha}$ as in (\ref{normdef}). Choosing $\alpha=2L$ together with estimate (\ref{fpthmeqn3}) yields
$$
{1\over \varepsilon}\left\|x_{\varepsilon}-y_{\varepsilon}\right\|_{2L}~\leq~{2\over\varepsilon}\left\|\Phi(u+\varepsilon\Delta u)-y_{\varepsilon}\right\|
$$
To prove (\ref{eqn11}), it suffices to show that 
\begin{equation}\label{eqn12}
\lim_{\varepsilon\rightarrow 0}\left(\sup_{t\in[t_0,\,t_f]}{1\over \varepsilon}\left|x_0+\int_{t_0}^{t_f}f\big(y_{\varepsilon}(s),\,u+\varepsilon\Delta u,\,s\big)\,ds-y_{\varepsilon}(t)\right|\right)~=~0
\end{equation}
Recalling the definition of $y_{\varepsilon}(\cdot)$, we obtain
\begin{align*}
&{1\over\varepsilon} \left| x_0+\int_{t_0}^tf\big(y_{\varepsilon}(s),\,u+\varepsilon\Delta u,\,s\big)\,ds-y_{\varepsilon}(t)\right|\\
=~&{1\over\varepsilon}\left|x_0+\int_{t_0}^tf\big(x(u,\,s)+\varepsilon z(s),\,u+\varepsilon\Delta u,\,s\big)\,ds -x(u,\,t)-\varepsilon z(t)\right|\\
=~&{1\over \varepsilon}\left|\left\{x_0+\int_{t_0}^tf\big(x(u,\,s),\,u(s),\,s\big)\,ds+\int_{t_0}^tD_xf\big(x(u,\,s),\,u(s),\,s\big)\cdot \varepsilon z(s)\,ds\right.\right.\\
&+\left. \int_{t_0}^tD_uf\big(x(u,\,s),\,u(s),\,s\big)\cdot\varepsilon\Delta u(s)\,ds-\varepsilon z(t)\right\}\\
&+\int_{t_0}^t\int_0^1\left[D_xf\big(x(u,\,s)+\sigma\varepsilon z(s),\,u(s)+\sigma\varepsilon\Delta u(s),\,s\big)-D_xf\big(x(u,\,s),\,u(s),\,s\big)\right]\cdot\varepsilon z(s)\,d\sigma ds\\
&+\left.\int_{t_0}^t\int_0^1\left[D_uf\big(x(u,\,s)+\sigma\varepsilon z(s),\,u(s)+\sigma\varepsilon\Delta u(s),\,s\big)-D_uf\big(x(u,\,s),\,u(s),\,s\big)\right]\cdot\varepsilon z(s)\,d\sigma ds\right|\\
\leq~&\int_{t_0}^{t_f}\int_0^1 \left\|D_xf\big(x(u,\,s)+\sigma\varepsilon z(s),\,u(s)+\sigma\varepsilon\Delta u(s),\,s\big)-D_xf\big(x(u,\,s),\,u(s),\,s\big)\right\|\cdot |z(s)|\,d\sigma ds\\
&+\int_{t_0}^{t_f}\int_0^1\left\|D_u f\big(x(u,\,s)+\sigma\varepsilon z(s),\,u(s)+\sigma\varepsilon\Delta u(s),\,s\big)-D_uf\big(x(u,\,s),\,u(s),\,s\big)\right\|\cdot|\Delta u(s)|\,d\sigma ds
\end{align*}
By the Dominated Convergence Theorem, the above right hand side converges to zero as $\varepsilon\rightarrow 0$, proving (\ref{eqn12}) and hence (\ref{eqn11}).
\end{proof}

\section{Application to Dynamic User Equilibrium} 

\subsection{Review of the differential variational inequality formulation of DUE}
We will assume the following planning time horizon for the time being
$$
[t_0,\,t_f]\subset \mathbb{R}_+
$$
The most crucial component of the DUE model is the path delay operator, which provides the time delay on any path $p$ per unit flow departing from the origin of that path. The delay operator is denoted by 
$$
D_p(t,\,h)\qquad \forall ~p\in\mathcal{P}
$$
where $\mathcal{P}$ is the set of all paths employed by network users, $t$ denotes the departure time, $h$ is a vector of departure rates. Throughout the rest of the paper, we stipulate that 
$$
h\in \Big(\mathcal{L}_+^2([t_0,\,t_f]\Big)^{|\mathcal{P}|}
$$
where $\Big(\mathcal{L}_+^2([t_0,\,t_f])\Big)^{|\mathcal{P}|}$ denotes the non-negative cone of the $|\mathcal{P}|$-fold product of the Hilbert space $\mathcal{L}^2([t_0,\,t_f])$ of square-integrable functions on the compact interval $[t_0,\,t_f]$. The inner product of the Hilbert space $\Big(\mathcal{L}^2([t_0,\,t_f])\Big)^{|\mathcal{P}|}$ is defined as 
\begin{equation}\label{nnorm}
\big<u,\,v\big>~\doteq~\int_{t_0}^{t_f} (u(s))^T\,v(s)\,ds
\end{equation}
where the superscript $T$ denotes transpose of vectors. This inner product induces the norm 
\begin{equation}\label{l2norm}
\big\|u\big\|_{\mathcal{L}^2}~\doteq~\big<u,\,u\big>^{1/2}
\end{equation}

 Next, for each $p\in\mathcal{P}$, we define the effective unit path delay operator $\Psi_p: [t_0,\,t_f]\times \Big(\mathcal{L}_+^2([t_0,\,t_f])\Big)^{|\mathcal{P}|}\rightarrow \mathbb{R}$ via
\begin{equation}\label{phidef}
\Psi_p(t,\,h)~=~D_p(t,\,h)+\mathcal{F}\Big(t+D_p(t,\,h)-T_A\Big)
\end{equation}
where $\mathcal{F}(\cdot)$ is the penalty for early or later arrival relative to the target arrival time $T_A$. We interpret $\Psi_p(t,\,h)$ as the perceived travel cost of driver starting at time $t$ on path $p$ under travel conditions $h$. Presently, our only assumption on such costs is that for each $h\in\Big(\mathcal{L}_+^2([t_0,\,t_f])\Big)^{|\mathcal{P}|}$, the function $\Psi(\cdot,\,h): [t_0,\,t_f]\rightarrow \mathbb{R}$ is measurable. This minimal assumption was used for a measure theory-based argument in \cite{Friesz1993}.


To support the development of a dynamic network user equilibrium model in the following, we introduce some additional constraints, namely, the flow conservation constraints, 
\begin{equation}\label{flowcons}
\sum_{p\in\mathcal{P}_{ij}}\int_{t_0}^{t_f} h_p(t)\,dt~=~Q_{ij}\qquad\forall ~(i,\,j)\in\mathcal{W}
\end{equation}
where $\mathcal{P}_{ij}$ is the set of all paths that connect origin-destination (O-D) pair $(i,\,j)$. $\mathcal{W}$ is the set of all O-D pairs. In addition, $Q_{ij}$ is the fixed travel demand for O-D pair $(i,\,j)$.  Using the notation and concepts we have mentioned, the feasible region for DUE when the effective delay operator $\Psi(\cdot,\,\cdot)$ is given is 
\begin{equation}\label{feasible}
\Lambda~=~\left\{h\in\Big(\mathcal{L}_+^2([t_0,\,t_f])\Big)^{|\mathcal{P}|}: \quad \sum_{p\in\mathcal{P}_{ij}}\int_{t_0}^{t_f} h_p(t)\,dt~=~Q_{ij}\quad\forall ~(i,\,j)\in\mathcal{W}\right\}
\end{equation}

The following definition of dynamic user equilibrium was first articulated in \cite{Friesz1993}:
\begin{definition}\label{duedef}{\bf (Dynamic user equilibrium)}. A vector of departure rates (path
flows) $h^{\ast }\in \Lambda $ is a dynamic user equilibrium if%
\begin{equation}\label{defdue}
h_{p}^{\ast }\left( t\right) >0,p\in P_{ij}\Longrightarrow \Psi _{p}\left[
t,h^{\ast }\left( t\right) \right] =v_{ij}
\end{equation}
We denote this equilibrium by $DUE\left( \Psi ,\Lambda ,\left[
t_{0},t_{f}\right] \right) $.
\end{definition}

Using measure theoretic argument, Friesz et al. (1993) established that a dynamic user equilibrium is equivalent to the following variational inequality under suitable regularity conditions:
\begin{equation}\label{duevi}
\left. 
\begin{array}{c}
\text{find }h^*\in \Lambda \text{ such that} \\ 
\displaystyle \sum_{p\in \mathcal{P}} \int_{t_0}^{t_{f}}\Psi _{p}(t,h^*)(h_{p}-h_{p}^{\ast })dt\geq 0 \\ 
\forall h\in \Lambda %
\end{array}%
\right\} VI \Big(\Psi ,\Lambda ,[ t_{0},t_{f}] \Big)  
\end{equation}

It has been reported in Friesz et al. (2010) that (\ref{duevi}) is equivalent to a differential variational inequality. This is most easily seen by noting that the flow conservation constraints may be re-stated as

\begin{equation}\label{tpbdy}
\left.
\begin{array}{l}
\displaystyle {d Y_{ij}\over dt}~=~\sum_{p\in\mathcal{P}_{ij}}h_p(t)\\
Y_{ij}(t_0)~=~0\\
Y_{ij}(t_f)~=~Q_{ij}\\
 \forall\,(i,\,j)\in\mathcal{W}
\end{array}\right\}
\end{equation}
which is recognized as a two boundary value problem. As a consequence (\ref{duevi}) may be re-written as the following differential variational inequality (DVI):
\begin{equation}\label{duedvi}
\left. 
\begin{array}{c}
\text{find }h^*\in \Lambda_1 \text{ such that} \\ 
\displaystyle \sum_{p\in \mathcal{P}} \int_{t_0}^{t_{f}}\Psi _{p}(t,h^*)(h_{p}-h_{p}^{\ast })dt\geq 0 \\ 
\forall h\in \Lambda_1 %
\end{array}%
\right\} DVI \Big(\Psi ,\Lambda_1 ,[ t_{0},t_{f}] \Big)  
\end{equation}
where 
\begin{align*}
\Lambda_1~=~&\left\{h\geq 0:~{d Y_{ij}\over dt}=\sum_{p\in\mathcal{P}_{ij}}h_p(t),~Y_{ij}(t_0)=0,~Y_{ij}(t_f)=Q_{ij},\quad \forall\,(i,\,j)\in\mathcal{W}\right\}\\
&\subset \Big(\mathcal{L}_+^2([t_0,\,t_f])\Big)^{|\mathcal{P}|}
\end{align*}

\subsection{The state operator}
In this section, we will apply the results established in Section \ref{secprop} to the DVI (\ref{duedvi}), and obtain properties of the state operator in $DVI\big(\Psi,\,\Lambda_1,\,[t_0,\,t_f]\big)$ such as existence and regularity. These properties of the state operator will allow us to further analyze  $DVI\big(\Psi,\,\Lambda_1,\,[t_0,\,t_f]\big)$. 

We begin by identifying the control set $U=\Lambda_1$, and control $h\in U$. The state variable becomes $Y=\big(Y_{ij}(h,\,t)\big)\in \big(C^0[t_0,\,t_f]\big)^{|\mathcal{W}|}$. Then the abstract definition of state operator (\ref{eqn1})-(\ref{eqn3}) can be instantiated as following 
\begin{multline}\label{eqn13}
Y(h,\,t)~=~\left\{Y:~{d Y\over dt}=f(Y,\,h,\,t),~Y(t_0)=Y_0,~\Gamma\left[Y(t_f),\,t_f\right]=0\quad \forall \,(i,\,j)\in\mathcal{W}\right\}\\\in\Big(C^0[t_0,\,t_f]\Big)^{|\mathcal{W}|}
\end{multline}
where
\begin{align}
\label{eqn14}
&Y_0=0~\in~\mathbb{R}^{|\mathcal{W}|}\\
\label{eqn15}
&f: \mathbb{R}_+^{|\mathcal{W}|}\times \mathbb{R}_+^{|\mathcal{P}|}\times \mathbb{R}^1~\longrightarrow ~\mathbb{R}_+^{|\mathcal{P}|},\quad \big[f(Y,\,h,\,t)\big]_{ij}~=~\sum_{p\in\mathcal{P}_{ij}}h_p,\quad \forall \,(i,\,j)\in\mathcal{W}\\
\label{eqn16}
&\Gamma: \mathbb{R}_+^{|\mathcal{W}|}\times\mathbb{R}^1~\longrightarrow~\mathbb{R}^{|\mathcal{W}|},\qquad \big[\Gamma[Y,\,t]\big]_{ij}~=~Y_{ij}-Q_{ij},\qquad \forall\,(i,\,j)\in\mathcal{W}
\end{align}

The following results are straightforward.

\begin{proposition}\label{simpleprop1}{\bf (Existence and continuity of the state operator)}
The state operator $h\mapsto Y(h,\,\cdot)$ is well-defined and is continuous from $\Lambda_1$ into $\big(C^0[t_0,\,t_f]\big)^{|\mathcal{P}|}$.
\end{proposition}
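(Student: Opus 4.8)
The plan is to obtain Proposition~\ref{simpleprop1} by specializing Theorems~\ref{existencethm} and~\ref{continuitythm} to the data $f,\,Y_0,\,\Gamma$ identified in (\ref{eqn14})--(\ref{eqn16}), while observing that the structure of $f$ is so simple that the argument can also be run directly and cleanly. The crucial point is that the right‑hand side in (\ref{eqn15}) does not depend on the state at all: $[f(Y,h,t)]_{ij}=\sum_{p\in\mathcal{P}_{ij}}h_p$. Hence $f$ is Lipschitz in $Y$ with constant $L=0$ (indeed $D_Yf\equiv 0$), and for every $h\in\Lambda_1\subset\big(\mathcal{L}^2_+[t_0,t_f]\big)^{|\mathcal{P}|}$ the map $s\mapsto f(Y,h(s),s)$ is integrable on the bounded interval $[t_0,t_f]$, since $\mathcal{L}^2[t_0,t_f]\hookrightarrow\mathcal{L}^1[t_0,t_f]$ there.

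First I would establish well-definedness. Because $f$ is independent of $Y$, the initial value problem contained in (\ref{eqn13}) with $Y(t_0)=Y_0=0$ has, by Theorem~\ref{existencethm} (or simply by direct integration), the explicit unique solution
\[
Y_{ij}(h,t)~=~\int_{t_0}^{t}\sum_{p\in\mathcal{P}_{ij}}h_p(s)\,ds,\qquad (i,j)\in\mathcal{W},
\]
which is absolutely continuous in $t$, hence lies in $C^0[t_0,t_f]$. The initial condition $Y_{ij}(h,t_0)=0$ holds by construction, and for $h\in\Lambda_1$ the terminal condition $\Gamma[Y(h,t_f),t_f]=0$, i.e.\ $Y_{ij}(h,t_f)=Q_{ij}$, is exactly the flow-conservation identity built into the definition of $\Lambda_1$. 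Thus the two-point boundary value problem defining the state operator admits a unique solution for every admissible control, so $h\mapsto Y(h,\cdot)$ is a well-defined map from $\Lambda_1$ into $\big(C^0[t_0,t_f]\big)^{|\mathcal{W}|}$.

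Next I would prove continuity, following the scheme of Theorem~\ref{continuitythm} but in a simplified form. Let $h^n\to h$ in $\big(\mathcal{L}^2[t_0,t_f]\big)^{|\mathcal{P}|}$; by Cauchy--Schwarz on the finite interval this forces $h^n\to h$ in $\big(\mathcal{L}^1[t_0,t_f]\big)^{|\mathcal{P}|}$. Then, uniformly in $t\in[t_0,t_f]$,
\[
\big|Y_{ij}(h^n,t)-Y_{ij}(h,t)\big|~\leq~\int_{t_0}^{t_f}\sum_{p\in\mathcal{P}_{ij}}\big|h^n_p(s)-h_p(s)\big|\,ds~\longrightarrow~0,
\]
so $Y(h^n,\cdot)\to Y(h,\cdot)$ in $\big(C^0[t_0,t_f]\big)^{|\mathcal{W}|}$. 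Note that here, unlike in the proof of Theorem~\ref{continuitythm}, the passage to almost-everywhere convergent subsequences and the Dominated Convergence Theorem are not needed, precisely because $f$ is linear in $h$ and independent of $Y$.

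The only point that deserves care — and the reason I would not merely cite the general theorems verbatim — is that Assumption~(\textbf{A}), and hence the literal hypotheses of Theorems~\ref{existencethm}--\ref{continuitythm}, require the set of control \emph{values} $\mathbb{U}=\{h(t):t\in[t_0,t_f],\,h\in\Lambda_1\}$ to be compact in $\mathbb{R}^{|\mathcal{P}|}$ and $f$ to be bounded, neither of which is automatic for an $\mathcal{L}^2$-type feasible set such as $\Lambda_1$ (whose elements need not have bounded pointwise values). The resolution is exactly the observation above: since the present $f$ does not depend on $Y$, none of the estimates in those proofs actually invoke boundedness of $f$ or compactness of $\mathbb{U}$; integrability in $t$ (from $[t_0,t_f]$ finite and $h\in\mathcal{L}^2\subset\mathcal{L}^1$) together with the trivial Lipschitz constant $L=0$ suffice. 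Accordingly the cleanest write-up records the explicit formula for $Y(h,\cdot)$ and deduces existence, uniqueness, and continuity from it directly, citing Theorems~\ref{existencethm} and~\ref{continuitythm} as the conceptual source.
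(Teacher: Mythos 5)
Your proof is correct, and it is more than the paper provides: the paper simply declares Proposition~\ref{simpleprop1} ``straightforward,'' implicitly obtaining it by specializing Theorems~\ref{existencethm} and~\ref{continuitythm} to the data (\ref{eqn14})--(\ref{eqn16}). Your route differs in two useful ways. First, you exploit the fact that $f$ is independent of $Y$ to write the state operator in closed form, $Y_{ij}(h,t)=\int_{t_0}^{t}\sum_{p\in\mathcal{P}_{ij}}h_p(s)\,ds$, from which existence, uniqueness, consistency with the two-point boundary conditions (via the flow-conservation constraint defining $\Lambda_1$), and continuity all follow by the embedding $\mathcal{L}^2[t_0,t_f]\hookrightarrow\mathcal{L}^1[t_0,t_f]$ and a one-line estimate, with no need for the contraction-mapping or subsequence/Dominated Convergence machinery. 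Second, and more importantly, you correctly observe that the literal hypotheses of Assumption {\bf (A)} --- compactness of the set of control values $\mathbb{U}$ and boundedness of $f$ --- are not automatic for an $\mathcal{L}^2$-type feasible set such as $\Lambda_1$, whose elements need not be pointwise bounded; so a verbatim citation of the general theorems would leave a gap that the paper does not acknowledge. Your direct argument closes that gap, which is why it is the preferable write-up even though the general theorems remain the conceptual source. One cosmetic point: the codomain in the statement should be $\big(C^0[t_0,t_f]\big)^{|\mathcal{W}|}$ rather than $\big(C^0[t_0,t_f]\big)^{|\mathcal{P}|}$ (the state is indexed by O-D pairs, as in (\ref{eqn13})); your proof lands in the correct space.
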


\begin{proposition}\label{simpleprop2}{\bf (G-differentiability of the state operator)}
Let $h(\cdot)\in\Lambda_1$ be a control whose corresponding solution $Y(h,\,\cdot)$ is defined on $[t_0,\,t_f]$. Then for every bounded measurable $\Delta h(\cdot)$ and every $t\in[t_0,\,t_f]$, the map $\varepsilon \mapsto Y(h+\varepsilon\Delta h,\,t)$ is differentiable. In particular, if the derivative is defined as 
$$
\delta Y(h,\,\Delta h)~\doteq~\lim_{\varepsilon\rightarrow 0}{Y(h+\varepsilon\Delta h,\,t)-Y(h,\,t)\over \varepsilon}
$$
Then
$$
\delta Y(h,\,\Delta h)~\in~\mathbb{R}^{|\mathcal{W}|},\qquad \big[\delta Y(h,\,\Delta h)\big]_{ij}~=~\sum_{p\in\mathcal{P}_{ij}}\int_{t_0}^t \Delta h_p(s)\,ds,\qquad\forall\,(i,\,j)\in\mathcal{W}
$$
\end{proposition}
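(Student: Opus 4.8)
The plan is to obtain Proposition \ref{simpleprop2} as a direct specialization of Theorem \ref{diffthm}, exploiting the very simple structure of the right-hand side $f$ in (\ref{eqn15}): it does not depend on the state $Y$ at all and is linear, with constant coefficients, in the control $h$. First I would record that, since $[f(Y,\,h,\,t)]_{ij}=\sum_{p\in\mathcal{P}_{ij}}h_p$, we have $D_Y f\equiv 0$ and $D_h f\equiv E$, where $E$ is the fixed $|\mathcal{W}|\times|\mathcal{P}|$ path--OD incidence matrix whose $\big((i,j),\,p\big)$ entry is $1$ if $p\in\mathcal{P}_{ij}$ and $0$ otherwise. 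In particular $f$ is $C^1$ on all of $\mathbb{R}^{|\mathcal{W}|}\times\mathbb{R}^{|\mathcal{P}|}\times\mathbb{R}$, so the differentiability hypothesis of Theorem \ref{diffthm} is automatic, and existence/continuity of the trajectory is already available from Proposition \ref{simpleprop1}.

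Next I would identify the fundamental matrix $M$ of the linearized system (\ref{eqn9}). Since $A(t)=D_Y f\big(Y(h,t),h(t),t\big)\equiv 0$, equation (\ref{eqn9}) reads $\dot v(t)=0$, hence $M(t,\cdot)\equiv I_{|\mathcal{W}|}$. Substituting $M\equiv I$ and $D_h f\equiv E$ into formula (\ref{eqn8}) collapses it to
\[
\delta Y(h,\,\Delta h)~=~\int_{t_0}^{t} E\,\Delta h(s)\,ds ,
\]
and reading off the $(i,j)$ component gives exactly $\big[\delta Y(h,\,\Delta h)\big]_{ij}=\sum_{p\in\mathcal{P}_{ij}}\int_{t_0}^{t}\Delta h_p(s)\,ds$, the claimed identity.

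As a cleaner and self-contained alternative — which I would include at least as a remark, since it sidesteps any worry about whether assumption {\bf (A)} genuinely applies to $U=\Lambda_1$ (whose pointwise control values are not bounded, so the uniform bound $|f|\le C$ of (\ref{assumption}) is not literally in force) — one can just compute the difference quotient by hand. Because $f$ is affine in $h$ and free of $Y$, the explicit trajectory is $Y(h,t)_{ij}=\int_{t_0}^t\sum_{p\in\mathcal{P}_{ij}}h_p(s)\,ds$, so
\[
Y(h+\varepsilon\Delta h,\,t)_{ij}~=~Y(h,\,t)_{ij}+\varepsilon\sum_{p\in\mathcal{P}_{ij}}\int_{t_0}^{t}\Delta h_p(s)\,ds .
\]
The difference quotient is then independent of $\varepsilon$, so the limit in the definition is immediate; boundedness (indeed mere $L^2$-integrability) of $\Delta h$ is used only to keep $h+\varepsilon\Delta h$ in the space on which $Y(\cdot,\cdot)$ is defined.

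The only real obstacle is bookkeeping: one must make sure the invocation of Theorem \ref{diffthm} is legitimate, i.e.\ check that the compactness/boundedness conditions behind Theorems \ref{existencethm}--\ref{diffthm} are either met or circumvented for $\Lambda_1$. I expect to dispatch this by observing that the specific $f$ here does not involve $Y$, so the trajectory is given by the explicit formula above for every $h\in\Lambda_1$ and the linearized system has the explicit fundamental solution $M\equiv I$; hence the conclusions of those theorems hold along each trajectory irrespective of the global boundedness hypothesis. Everything else is a one-line substitution.
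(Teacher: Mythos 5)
Your proposal is correct, and its first half is precisely the argument the paper intends: the paper offers no written proof (it merely declares Propositions \ref{simpleprop1} and \ref{simpleprop2} ``straightforward''), the intended route being exactly your specialization of Theorem \ref{diffthm} with $D_Y f\equiv 0$, $D_h f\equiv E$ the path--OD incidence matrix, and hence $M\equiv I$ in formula (\ref{eqn8}). Your self-contained alternative --- writing the trajectory explicitly as $Y(h,t)_{ij}=\int_{t_0}^t\sum_{p\in\mathcal{P}_{ij}}h_p(s)\,ds$ so that the difference quotient is constant in $\varepsilon$ --- goes beyond what the paper does, and it earns its keep: you are right that assumption {\bf (A)} is not literally satisfied here, since elements of $\Lambda_1$ are only $\mathcal{L}^2_+$ and their pointwise values need not form a compact (or even bounded) subset of $\mathbb{R}^{|\mathcal{P}|}$, so the uniform bound $|f|\le C$ fails and the general theorems do not apply verbatim. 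The direct computation closes that gap cleanly, at the cost of using the special affine, state-free structure of $f$ rather than the general machinery; either argument yields the stated formula, but only the second is fully rigorous as the hypotheses stand.
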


\end{document}